\newcommand{\E}{\mathbb{E}}
\newcommand{\loca}{S}
\newcommand{\N}{\ensuremath{\mathbb{N}}}
 \renewcommand{\P}{\ensuremath{\mathbb{P}}} 
\newtheorem{lemma}{Lemma}
\newtheorem{theo}{Theorem}
\newtheorem{prop}{Proposition}
\newtheorem{coro}{Corollary}
\newtheorem{notalert}{Notation alert}[section]
\begin{document}



\title{Discrete one-dimensional coverage process on a renewal process}

\author{Sandro Gallo \\
{\small Departamento de Estat\'isitca}\\ {\small Universidade Federal de S\~ao Carlos}\\
{\small and}\\
Nancy L. Garcia\\{\small IMECC}\\{\small  Universidade Estadual de Campinas}}
\date{}

\maketitle

\begin{abstract}
We consider  the {following} coverage model on $\N$. For each site $i
\in \N$ we associate a pair $(\xi_i, R_i)$ where 
$\{\xi_0, \xi_1, \ldots \}$ is a 1-dimensional {undelayed} discrete renewal point process
and $\{R_0,R_1,\ldots\}$ is an i.i.d. sequence of
  $\N$-valued random variables. At each site where $\xi_i=1$ we start
  an interval of length $R_i$.  Coverage occurs if every site of
  $\N$ is covered by some interval. 
  We obtain sharp conditions for
both, positive and null probability of coverage.
  As corollaries, we extend
  results of the literature of rumor processes and discrete one-dimensional Boolean percolation.\end{abstract}

 \section{Introduction}

Consider two independent sequences of random variables, ${\boldsymbol \xi}=(\xi_i)_{i
  \ge 0}$ a sequence of Bernoulli random variables and $(R_i)_{i \ge
  0}$ a sequence of i.i.d. copies of some  $\N$-valued random variable
$R$. For each site $i \in \N$ associate the pair $(\xi_i,R_i)$, and
whenever $\xi_i=1$, start to the right an interval of length
$R_i$. Coverage occurs if every site of $\N$ is covered by some
interval. Naturally, the probability of coverage  depends on both,
the marginal distribution of $R$, and the joint distribution of the
$\xi_i$'s.  If the $\xi_i$'s are independent, this is a discrete
one-dimensional  Boolean percolation model, and the \emph{if and only
  if} condition  for positive probability of percolation is known, see
for example, \citeasnoun{bertacchi/zucca/2013} and
\citeasnoun{gallo/garcia/junior/rodriguez/2014}. For the Markov case,
\citeasnoun{athreya/roy/sarkar/2004} obtained sharp conditions for
both, positive and null probability of coverage.  {In the present
paper we extend their results for  undelayed renewal sequences.} This in particular answers an open question left in the recent review on the subject \cite[see problem (ii) in Section 7 therein]{junior/machado/ravi/2016}.

\vspace{0.2cm}

Our strategy of proof is to study the dual process of the coverage model. We prove that the dual process is  in fact a renewal process, with an inter-arrival distribution that depends on the distributions of $R$ and of the renewal sequence ${\boldsymbol \xi}$. This  result allows to obtain a closed formula giving the exact probability of coverage. Informally, we could say that  this formula is expressed in terms of the generating function of arbitrarily large vectors of the renewal process ${\boldsymbol \xi}$. With this in hands, explicit conditions  will be obtained. The condition for null probability of coverage is easily obtained while the condition for positive probability of coverage is more tricky, involving exponential inequalities from martingales differences.

\vspace{0.2cm}

It is worth mentioning that in \citeasnoun{junior/machado/zuluaga/2011},
\citeasnoun{bertacchi/zucca/2013}, and \citeasnoun{gallo/garcia/junior/rodriguez/2014}, the one-dimensional discrete Boolean percolation model is referred as \emph{firework process}, and it is given an interpretation in terms of information propagation. Initially, there is one spreader at site 0 and ignorants at all (or some random subset of) the other sites of $\N$. { Each time an ignorant \emph{hears} the rumor, it becomes a spreader.} Each spreader tries to spread the information to the individuals on its right  within a random distance.  These works also analyse a closely related model, called \emph{reverse firework process}, which also starts with a spreader at $0$, but then, each ignorant of $\N$ tries to take the information, independently from the others, from  spreaders within a random distance on its left.  In both models, the information survives if the propagation lasts forever, reaching infinity. In all these works, the location of the ignorants on $\N$, if random, was described by an i.i.d. Bernoulli sequence indicating whether or not there is an individual at each site.

 In these terms, our model is an extension of the firework process in which the individuals are localized according to a binary undelayed renewal sequence ${\boldsymbol \xi}$.  In our terminology, {propagation} means that there is coverage. 
We will also prove  that the reverse firework process is nothing more than the {dual} of the firework process, showing that both processes are indeed, two sides of the same coin.

\vspace{0.2cm}

The rest of the paper is organized as follows. In the next section, we formally introduce  the coverage model and state the results. The proofs are given in Section \ref{sec:proofs}. Section \ref{sec:discussion} contains a discussion of the results, specifically about a technical condition and the obtention of explicit bounds for the probability of coverage and the relation with the rumor processes described above. We conclude the paper with an appendix containing some results concerning renewal theory used in our proofs and a technical lemma based on techniques involving martingale difference.

\section{Model and results} \label{sec:model}

\subsection{The model}

Let $(\Omega,\mathcal{F}, \P)$ be the single probability space in which all the forthcoming  random variables are defined.
Let $(T_i)_{i\ge1}$ be a sequence of independent copies of some  $\N^\star$-valued r.v. $T$ ({in this paper we use $\N=\N^\star\cup\{0\}$).} Let us also introduce the binary process ${\boldsymbol
  \xi}=(\xi_i)_{i \ge0}$ as $\xi_0=1$ and for $i\ge1$, let  $\xi_i=1$ if, and only if, there exists $n\ge1$ such that $\sum_{k=1}^nT_k=i$.
${\boldsymbol \xi}$ is a binary undelayed renewal sequence with inter-arrival $T$ \cite{feller/1968,lindvall/1992}.  
Finally, let $(R_i)_{i\in\N}$ be a sequence of independent copies of some $\N$-valued random variable $R$ satisfying $\P(R=0)>0$, and independent of the sequence ${\boldsymbol
  \xi}$.
 
The coverage process is now defined simply as follows.  Consider the sequence ${\cal C}_i,\,i\ge0$ of random intervals defined as $
{\cal C}_i:=[i+1,i+R_i]$ if $\xi_i=1$ and ${\cal C}_i=\emptyset$ if $\xi_i=0$ or $R_i=0$. 
We say that there is \emph{coverage} if the event
\[
\mathcal{A}:=\{\cup_{i\ge0}{\cal C}_i=\N\}
\]
occurs. The main objective of the present paper is to study $\P(\mathcal{A})$. We begin with the  following simple result.

\begin{prop}\label{prop:obvio}
If $\E T=\infty$, then $\P(\mathcal{A})=0$.
\end{prop}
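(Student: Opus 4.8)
The plan is to show that percolation forces, for every $n$, the total length of the intervals initiated in $\{0,\dots,n-1\}$ to be at least $n$, and then to argue that this becomes impossible as $n\to\infty$ because $\E T=\infty$ makes the renewals asymptotically vanishing in density, so that the available interval mass cannot keep up with the length to be covered.

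First I would record the deterministic reduction. On $\mathcal{A}$ the whole segment $[1,n]$ is covered by intervals, and every interval meeting $[1,n]$ is a $C_i$ with $i\le n-1$; since $\sum_{i=0}^{n-1}\xi_iR_i=\sum_{i=0}^{n-1}|C_i|\ge\big|\cup_{i\le n-1}C_i\big|\ge n$ on this event, we obtain $\mathcal{A}\subseteq B_n:=\{\sum_{i=0}^{n-1}\xi_iR_i\ge n\}$ for every $n\ge1$, and hence $\P(\mathcal{A})\le\inf_{n}\P(B_n)$.

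Next comes the renewal input. Writing $u_i=\P(\xi_i=1)$ and $N_n=\sum_{i=0}^{n-1}\xi_i$, the independence of $(R_i)_{i\ge0}$ and ${\boldsymbol\xi}$ gives $\E\big[\sum_{i=0}^{n-1}\xi_iR_i\big]=\E R\cdot\E N_n=\E R\sum_{i=0}^{n-1}u_i$. Since $\E T=\infty$, the elementary renewal theorem yields $\E N_n/n\to 1/\E T=0$, so $\E N_n=o(n)$. Markov's inequality then gives $\P(B_n)\le \E\big[\sum_{i<n}\xi_iR_i\big]/n=\E R\,(\E N_n)/n\to0$, which combined with the previous display forces $\P(\mathcal{A})=0$.

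The step I expect to be the main obstacle is the appeal to $\E R<\infty$, which the total-length estimate needs through $\E[\sum\xi_iR_i]=\E R\,\E N_n$. If one does not wish to assume a finite mean for $R$, the bound $B_n$ is too lossy, and one must instead track the reach process $D_k:=\max_{0\le j\le k}(S_j+R_{S_j})-S_k$, where $0=S_0<S_1<\cdots$ are the renewals and $T_k=S_k-S_{k-1}$; percolation then coincides with the event $\{D_{k-1}\ge T_k\ \forall k\ge1\}$, with $(D_k)$ the Markov chain $D_k=\max(R_{S_k},\,D_{k-1}-T_k)$. One would show that $(D_k)$ returns to a bounded set $[0,b]$ infinitely often and invoke the conditional Borel--Cantelli lemma on the failure events $\{T_k>D_{k-1}\}$, whose conditional probabilities are bounded below by $\P(T>b)>0$ on $\{D_{k-1}\le b\}$. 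Establishing this recurrence, which is precisely where $\E T=\infty$ must be exploited to produce the downward drift that outweighs the upward resets by $R_{S_k}$, is the delicate point in the general case.
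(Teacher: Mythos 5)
Your first argument is correct as far as it goes, and it is a genuinely different route from the paper's: the paper never looks at the total mass of intervals, but instead combines the duality Lemma~\ref{lemma:dual} with Proposition~\ref{theo:rfp} and the Renewal Theorem, writing $\P(\mathcal{A})=\lim_n\P(Y_n=1)=\lim_n\P(0\leftrightarrow n)\le\lim_n\P(\xi_n=1)=1/\E T=0$. The problem, as you yourself flag, is that your Markov-inequality step $\P(B_n)\le \E R\,\E N_n/n$ needs $\E R<\infty$, an assumption the proposition does not make ($R$ is an arbitrary $\N$-valued variable). So your main argument proves the statement only under an extra moment hypothesis, and everything hinges on whether your fallback plan can close the gap.

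It cannot: the recurrence you hope to establish for the reach chain $(D_k)$ is false for heavy-tailed $R$, and in that regime the statement itself fails, so no completion of your sketch (or of any argument) is possible. Concretely, take $q_i=(i+1)/(i+2)$, so that $\P(T>n)=1/(n+1)$, $T<\infty$ a.s.\ and $\E T=\infty$, and take $\P(R>n)=1/\log(n+3)$. Let $0=S_0<S_1<\cdots$ be the renewals (a.s.\ infinitely many, and $S_k=O(k^3)$ a.s.\ by Borel--Cantelli); up to a null event, $\mathcal{A}$ is the event $\{\max_{j\le k}(S_j+R_{S_j})\ge S_{k+1}\ \forall k\ge0\}$. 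Conditionally on $(S_k)_{k\ge0}$ the radii $R_{S_j}$ are i.i.d.\ copies of $R$; the failure probability at step $k$ is $\prod_{j=0}^{k}\P(R<S_{k+1}-S_j)\le\bigl(1-\tfrac{1}{\log(S_{k+1}+3)}\bigr)^{k+1}\le e^{-ck/\log k}$ for large $k$, which is summable, whereas surviving through step $K$ has conditional probability at least $\P(R\ge S_{K+1})\ge c/\log K$ (it suffices that the interval at the origin reach $S_{K+1}$). Since $c/\log K$ dominates the tail $\sum_{k>K}e^{-ck/\log k}$ for $K$ large, percolation has positive conditional probability for a.e.\ realization of $(S_k)_{k\ge0}$, hence $\P(\mathcal{A})>0$ although $\E T=\infty$. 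In this example $D_k\ge\max_{j\le k}R_{S_j}-S_k\to\infty$ a.s., so $(D_k)$ visits no bounded set infinitely often, which is exactly why your conditional Borel--Cantelli scheme breaks down. This also pinpoints the weak link in the paper's own two-line proof: the identity $\P(\mathcal{A})=\lim_n\P(Y_n=1)$ replaces $\P(\limsup_n\{0\leftrightarrow n\})$ by $\limsup_n\P(0\leftrightarrow n)$, and the example violates precisely this ($\P(0\leftrightarrow n)\le\P(\xi_n=1)\to0$, yet infinitely many of the events $\{0\leftrightarrow n\}$ occur with positive probability). So your instinct that this is ``the delicate point'' was right, but the correct conclusion is that the proposition requires a hypothesis such as $\E R<\infty$ --- under which your first, elementary argument is a complete proof, while the paper's argument is not.
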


Therefore, the case where $\E T=\infty$ is not interesting for this model.  Unless explicitly mentioned, we will always assume  $\E T<\infty$ (i.e. the process $(\xi_i)_{i \ge 0}$ is positive recurrent) in the sequel. We further assume, to simplify the presentation, that ${\boldsymbol\xi}$ is aperiodic, that is, the period $d:=\textrm{gcd}\{k\ge1:\P(T=k)>0\}$ equals $1$.

Our first main result is a closed formula for the probability of
coverage (use $1/\infty=0$). 
\begin{theo}\label{theo} $\P(\mathcal{A})=\left(1+\sum_{n\ge1}\E\prod_{i=0}^{n-1}\alpha_i^{\xi_{i+1}}\right)^{-1}$ where $\alpha_n:=\P(R\leq n),\, n\ge0$.
\end{theo}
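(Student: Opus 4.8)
The plan is to follow the duality strategy announced in the Introduction: transfer the percolation question to the reverse renewal firework, exhibit there a genuine undelayed renewal process, and read off $\P(\mathcal{A})$ from the discrete Renewal Theorem. As a first reduction, note that an uncovered site blocks every connection to its right, so that $\mathcal{A}$ agrees (up to a null event) with the event that every site of $\N$ is covered by some interval. Since the events $\{\{1,\dots,N\}\text{ is covered}\}$ are nested, I would write $\P(\mathcal{A})=\lim_{N\to\infty}\P(\{1,\dots,N\}\text{ is covered})$, which reduces the problem to the finite-segment coverage probabilities.

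Next I would set up the dual object. In the reverse firework the interval attached to a renewal site $j$ points to the left and covers $[j-R_j,\,j-1]$; this interval reaches back to the origin precisely when $R_j\ge j$. Define the reach chain $\mathcal{R}$ by $S_0=0$ and letting $S_{l+1}$ be the first site past $S_l$ whose leftward interval reaches $S_l$, and set $\tau$ equal to one such increment, i.e.\ the first $n\ge 1$ with $\xi_n=1$ and $R_n\ge n$. Because $\boldsymbol{\xi}$ renews at every occurrence of a $1$ and the $R_i$ are i.i.d.\ and independent of $\boldsymbol{\xi}$, each landing site $S_{l+1}$ is a fresh renewal of $\boldsymbol{\xi}$ from which the future is an independent copy, so the increments are i.i.d.\ copies of $\tau$ and $\mathcal{R}$ is an undelayed renewal process. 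The event $\{\tau>n\}$ says that none of the sites $1,\dots,n$ reaches the origin; conditioning on $\boldsymbol{\xi}$, each renewal $j$ fails with probability $\P(R<j)=\alpha_{j-1}$, and taking expectations gives $\P(\tau>n)=\E\prod_{j=1}^{n}\alpha_{j-1}^{\xi_j}=\E\prod_{i=0}^{n-1}\alpha_i^{\xi_{i+1}}$. Hence $\E\tau=\sum_{n\ge 0}\P(\tau>n)=1+\sum_{n\ge 1}\E\prod_{i=0}^{n-1}\alpha_i^{\xi_{i+1}}$, which is exactly the denominator in the statement.

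The crux is to identify the finite coverage probability with the renewal mass $u_N:=\P(N\in\mathcal{R})$. Here I would use that the law of $(\xi_0,\dots,\xi_N)$ conditioned on $\xi_0=\xi_N=1$ is invariant under the reversal $\xi_i\mapsto\xi_{N-i}$, together with the symmetry of the i.i.d.\ sequence $(R_i)$: under this reversal a forward covering of $\{1,\dots,N\}$ maps to a leftward reaching of the segment that lands exactly on $N$, so that $\P(\{1,\dots,N\}\text{ covered})=\P(N\in\mathcal{R})=u_N$. This is the step I expect to be the main obstacle, and it has two parts that must be handled with care: proving rigorously that $\mathcal{R}$ is a bona fide undelayed renewal process with the claimed inter-arrival law (the regeneration relies on the renewal property of $\boldsymbol{\xi}$ at the reached sites and the independence of the radii consumed so far from those used later), and proving the reversibility identity $\P(\{1,\dots,N\}\text{ covered})=u_N$. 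One must resist the tempting but false shortcut that forward coverage on all of $\N$ equals reverse coverage on all of $\N$; the identity holds only on the conditioned finite segment, where restricting the admissible interval sources to $\{1,\dots,N\}$ is precisely what turns the reverse-coverage probability into the renewal mass $u_N$.

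Finally I would invoke the discrete Renewal Theorem. After checking aperiodicity of $\tau$ (for instance $\P(\tau=1)=\P(\xi_1=1)\,\P(R\ge 1)>0$ in the nondegenerate case, with the periodic case treated separately), the undelayed renewal process $\mathcal{R}$ satisfies $u_N\to 1/\E\tau$ as $N\to\infty$. Combining this with the two previous paragraphs gives
\[
\P(\mathcal{A})=\lim_{N\to\infty}\P(\{1,\dots,N\}\text{ covered})=\lim_{N\to\infty}u_N=\frac{1}{\E\tau}=\left(1+\sum_{n\ge 1}\E\prod_{i=0}^{n-1}\alpha_i^{\xi_{i+1}}\right)^{-1},
\]
which is the asserted formula. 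Once the renewal structure of the dual and the reversibility identity are in place, the remaining computation of $\P(\tau>n)$ is routine, reducing to the independence of the $R_i$ followed by an expectation over $\boldsymbol{\xi}$.
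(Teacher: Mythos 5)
Your overall strategy is the one the paper follows: pass to the reverse (dual) process, show it is an undelayed renewal sequence, compute its inter-arrival tail, and invoke the discrete Renewal Theorem. Your second and fourth paragraphs are essentially correct and correspond to the paper's Proposition \ref{theo:rfp} and its use of the Renewal Theorem: the range of your reach chain $\mathcal{R}$ coincides with the set of informed sites of the paper's dual process ${\bf Y}$, the regeneration argument at the stopping times $S_l$ (strong renewal property of ${\boldsymbol\xi}$ plus independence of the unused radii) is a legitimate, in fact slicker, substitute for the paper's house-of-cards computation, and your $\P(\tau>n)=\E\prod_{i=0}^{n-1}\alpha_i^{\xi_{i+1}}$ is exactly the paper's inter-arrival law for $T_{\bf Y}$.

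The gap is in the step you yourself flagged as the crux, and it is fatal as written. The identity $\P(\{1,\dots,N\}\text{ covered})=\P(N\in\mathcal{R})$ is false: membership in $\mathcal{R}$ forces $\xi_N=1$, whereas coverage of $\{1,\dots,N\}$ is measurable with respect to $(\xi_i,R_i)_{0\le i\le N-1}$ and puts no constraint on $\xi_N$. Already for $N=1$ one has $\P(\{1\}\text{ covered})=\P(R_0\ge1)=1-\alpha_0$, while $\P(1\in\mathcal{R})=\P(\xi_1=1,R_1\ge1)=(1-q_0)(1-\alpha_0)$; in the i.i.d.\ case with density $p$ the two sides differ by a clean factor $p$ for every $N$, since coverage is then independent of $\xi_N$. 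The reversal invariance you invoke holds for the law of $(\xi_0,\dots,\xi_N)$ conditioned on $\xi_0=\xi_N=1$, and reversal exchanges the two endpoints, so the renewal-mark constraint at $N$ cannot be dropped from one side only: this is exactly how the paper's Duality Lemma (Lemma \ref{lemma:dual}) is stated, $\P(0\leftrightarrow n)=\P(Y_n=1)$, with $\{\xi_n=1\}$ built into \emph{both} events. As a consequence, your first reduction, $\P(\mathcal{A})=\lim_N\P(\{1,\dots,N\}\text{ covered})$, is incompatible with the rest of your argument: even after correcting the duality identity, what the Renewal Theorem computes is $\lim_N\P(\{1,\dots,N\}\text{ covered},\,\xi_N=1)$, which (by the same factor) is \emph{not} the limit of the unanchored coverage probabilities whenever that limit is positive. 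The paper never leaves the anchored events: its proof opens with $\P(\mathcal{A})=\limsup_n\P(0\leftrightarrow n)$ and carries the requirement $\xi_n=1$ at the right endpoint through the duality and the Renewal Theorem, which is what makes its chain of equalities close on the stated formula. To repair your proposal you must replace both your initial reduction and your claimed identity by their anchored versions, i.e., by the paper's Lemma \ref{lemma:dual}; the two cannot be fixed independently.
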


Observe that the quantity 
\[
\E\prod_{i=0}^{n-1}\alpha_i^{\xi_{i+1}}=\sum_{(a_1,\ldots,a_{n})\in\{0,1\}^n}\alpha_0^{a_1}\ldots
\alpha_{n-1}^{a_{n}}\P(\xi_1=a_1,\ldots,\xi_{n}=a_{n})\] 
is  the probability generating function of the binary vector $(\xi_1,\ldots,\xi_{n})$ at $(\alpha_0,\ldots,\alpha_{n-1})$. This quantity is difficult to handle in general. Naturally, when ${\boldsymbol\xi}$ is an i.i.d. sequence, we   use independence to factorize $\E\prod_{i=0}^{n-1}\alpha_i^{\xi_{i+1}}$, and a direct consequence of Theorem \ref{theo} is that 
\begin{equation}\label{coro:i.i.d}
\P(\mathcal{A})=\left(1+\sum_{n\ge1}\prod_{i=0}^{n-1}[1-p(1-\alpha_i)]\right)^{-1}
\end{equation}
where $p=\P(\xi_i=1),i\ge1$.
%
%
This result was obtained by \citeasnoun{gallo/garcia/junior/rodriguez/2014}, and so far, such an exact expression is only available in the i.i.d. case.  We postpone to Section \ref{sec:discussion} the discussion concerning possible upper and lower bounds for $\P(\mathcal{A})$ in non-i.i.d. cases.

Our next results give explicit sufficient conditions for  null or positive probability of coverage.  For null probability, the following result holds without any further assumptions concerning $\boldsymbol\xi$. 
\begin{prop}\label{coro:1} If $\limsup_{n\rightarrow\infty} n(\E T)^{-1}(1-\alpha_n)<1$, then $ \P(\mathcal{A})=0$.
\end{prop}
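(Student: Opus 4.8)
The plan is to read the claim off the closed formula of Theorem~\ref{theo}. Writing $a_n:=\E\prod_{i=0}^{n-1}\alpha_i^{\xi_{i+1}}$, that theorem gives $\P(\mathcal{A})=(1+\sum_{n\ge1}a_n)^{-1}$, so $\P(\mathcal{A})=0$ if and only if $\sum_{n\ge1}a_n=\infty$. Everything therefore reduces to showing that the hypothesis $\limsup_n n(\E T)^{-1}(1-\alpha_n)<1$ forces the divergence of $\sum_n a_n$.

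First I would produce a convexity lower bound for each $a_n$. Since $\prod_{i=0}^{n-1}\alpha_i^{\xi_{i+1}}=\exp\big(\sum_{i=0}^{n-1}\xi_{i+1}\log\alpha_i\big)$, Jensen's inequality applied to the convex function $\exp$ yields
\[
a_n\ \ge\ \exp\Big(\sum_{i=0}^{n-1}(\log\alpha_i)\,\E\xi_{i+1}\Big).
\]
When the first few $\alpha_i$ vanish, i.e. when $r_0:=\min\mathrm{supp}(R)\ge1$, this step is carried out after conditioning on the event $\{\xi_1=\cdots=\xi_{r_0}=0\}=\{T>r_0\}$, which deletes the vanishing factors and leaves a delayed renewal sequence with all remaining $\alpha_i>0$; I note that this requires $\P(T>r_0)>0$, the generic nondegenerate situation.

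Next I would feed in two asymptotics. The undelayed renewal structure gives, by the Renewal Theorem, $\E\xi_{i+1}=\P(\xi_{i+1}=1)\to 1/\E T$, so for each $\epsilon>0$ one has $\E\xi_{i+1}\le(1+\epsilon)/\E T$ for all large $i$. The hypothesis produces a constant $c<1$ with $1-\alpha_i\le c\,\E T/i$ for all large $i$, whence (using $\alpha_i\to1$) $\log\alpha_i\ge-(1+\epsilon)(1-\alpha_i)\ge-(1+\epsilon)c\,\E T/i$. Combining these, and using that $\log\alpha_i\le0$ together with the elementary fact that $ab\ge aM$ whenever $a\le0$ and $b\le M$, each summand obeys $(\log\alpha_i)\,\E\xi_{i+1}\ge-(1+\epsilon)^2 c/i$ for all large $i$. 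Summation then gives $\sum_{i=0}^{n-1}(\log\alpha_i)\E\xi_{i+1}\ge-(1+\epsilon)^2 c\log n - C_0$, so that $a_n\ge C_1\,n^{-(1+\epsilon)^2 c}$ for some $C_1>0$ and all large $n$.

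Finally, choosing $\epsilon$ small enough that $(1+\epsilon)^2 c<1$ (possible because $c<1$), the exponent remains strictly below $1$, so $\sum_n a_n\ge C_1\sum_n n^{-(1+\epsilon)^2 c}=\infty$, and Theorem~\ref{theo} gives $\P(\mathcal{A})=0$. The hard part will be the middle step: one must merge the Renewal-Theorem limit for $\E\xi_{i+1}$ with the tail bound for $\log\alpha_i$ while keeping the two $(1+\epsilon)$ error factors under control, so that the product exponent $(1+\epsilon)^2 c$ can still be pushed below $1$. The strict inequality $c<1$ in the hypothesis is precisely what leaves room to absorb the renewal error, and a secondary technical point is the treatment of the initial indices where $\alpha_i$ may vanish, dealt with by the conditioning described above.
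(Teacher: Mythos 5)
Your proof is correct and follows essentially the same route as the paper: reduce to divergence of $\sum_n\E\prod_{i=0}^{n-1}\alpha_i^{\xi_{i+1}}$ via the closed formula of Theorem~\ref{theo}, lower-bound each term by $\prod_{i=0}^{n-1}\alpha_i^{\P(\xi_{i+1}=1)}$ through Jensen's inequality, and then combine the Renewal Theorem with the tail hypothesis on $1-\alpha_n$. The only differences are in the last step and are minor: where the paper invokes the Raabe criterion together with a Taylor expansion of $\alpha_k^{(\E T)^{-1}}$, you use explicit logarithmic estimates and comparison with $\sum_n n^{-(1+\epsilon)^2c}$ (the same computation in more elementary dress), and your conditioning on $\{T>r_0\}$ to handle vanishing $\alpha_i$'s is extra care the paper omits---its Raabe argument tacitly requires all $\alpha_i>0$, so your treatment of that degenerate case is a genuine (if small) improvement.
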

To guarantee positive probability of coverage, we will need an
  extra assumption concerning ${\boldsymbol\xi}$. Let us introduce the notation
  \[q_i:=\P(T\ge i+2|T\ge i+1),\quad i\ge0\]
  and
  \begin{equation}\label{eq:q*}
  q_i^\star:=\max_{n\leq i}q_n,\,\,i\ge0.
  \end{equation}
\begin{prop}\label{coro:2} If  
\begin{equation}
\label{eq:ok}
\sum_{j=1}^k\prod_{i=k}^{k+j-2} q^\star_i =
 o(k).
\end{equation}
and  $\liminf_{n\rightarrow\infty} n(\E T)^{-1}(1-\alpha_n)>1$, then $\P(\mathcal{A})>0$.
\end{prop}

Propositions \ref{coro:1} and \ref{coro:2} together yield a tight phase transition between null and positive probability of coverage for processes satisfying \eqref{eq:ok}. Let us also observe that we know that condition \eqref{eq:ok} is not optimal, a more general assumption will be given later in the proof (condition \eqref{eq:general}), and we will discuss in Section \ref{sec:discussion} the difference between both conditions.

We conjecture that $\liminf_{n\rightarrow\infty} n(\E T)^{-1}(1-\alpha_n)>1$ implies $\P(\mathcal{A})>0$ without any further assumption (only positive recurrence) on the renewal process, but we have not been able to prove this.

  Some explicit example satisfying condition \eqref{eq:ok} are listed below. 
\begin{enumerate} \item {\it The Doeblin case:} $q_i\leq
1-\epsilon$ for $i\ge0$ clearly satisfies condition \eqref{eq:ok} since the lefthand side is bounded in $k$. 
\item {{\it The Markov case:} $q_i=q_1$ for any $i\ge1$, with $q_0$, $q_1$ $\in(0,1)$. In this case,  ${\boldsymbol\xi}$ is the positive recurrent Markov chain with transition matrix $P(1|1)=1-q_0$ and $P(1|0)=1-q_1$ and we recover the results obtained by \citeasnoun{athreya/roy/sarkar/2004}. Notice that this case is a particular case of the preceding (Doeblin) assumption.}
\item {\it The monotonicity case:}  $q_i=1-\frac{1}{i^\beta}$, $i\ge2$ with $0<\beta<1$,  satisfies condition (\ref{eq:ok}), as we will show in Section \ref{sec:discussion}.
\end{enumerate}

 Section \ref{sec:discussion}  contains other new results, comparisons with the literature, and explicit examples.

\section{Proofs of the results}\label{sec:proofs}

The present section is organised as follows.   In Section \ref{sec:3.1} we introduce and analyse  a \emph{dual process} that we will use in Section \ref{sec:3.3} for the proofs of Theorem
\ref{theo} and Propositions \ref{coro:1} and \ref{coro:2}. {Some simple facts about renewal sequences, that we will use along our proofs, are presented in Appendix \ref{sec:renewal}. {The proof of Lemma \ref{lem:concentration1}, which  is rather lengthy, is deferred to Appendix \ref{sec:concentration}}.}

\subsection{The dual process} \label{sec:3.1}

{Consider our coverage process, and for any $i\ge1$ and $j\ge0$, { say that $i$ is \emph{reachable from $j$}} if there exists $k\in\mathbb{N}^\star$ and a sequence of natural numbers $j=n_0<n_1<\ldots<n_k= i$ {such that  $n_{l}\in \mathcal{C}_{n_{l-1}}$ for $l=1,\ldots,k$.}


For each $n\ge1$ let ${\bf Y}^{(n)}$ be the binary process,  defined through $Y_0^{(n)}=\xi_n$ and for $i=1,\ldots,n$
\[
Y_i^{(n)}={\bf 1}\{\textrm{$n$ is reachable from $n-i$}\}.
\]
It can be seen as a dual to our coverage process. { Figure \ref{FIG:FP} gives a pictorial representation of the coverage process and its dual $Y^{(n)}_i,i=0,\ldots,n$ with $n=10$. For instance we have  $(Y^{(n)}_0,\ldots,Y_n^{(n)})=(0,0,1,0,0,1,0,0,0,0,1)$.
  The underlying renewal process of the upper part is ${\boldsymbol\xi}$, undelayed, and the one of the lower part is $\boldsymbol\zeta$, with delay of size $1$ in the present case since $\xi_n=0$ and $\xi_{n+1}=1$.}

  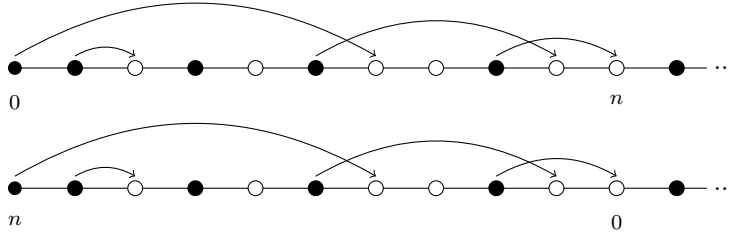
\begin{figure}[h]

\begin{center}
\begin{tikzpicture}[scale=0.8]

\draw (-3,-2) -- (8.5,-2);
\draw (-3,-2.3) node[below,font=\footnotesize] {$0$};
\draw (7,-2.3) node[below,font=\footnotesize] {$n$};
\draw (8.8,-2) node {...};


\filldraw [black] (-3,-2) circle (3pt);
\draw [very thick] (-2,-2) circle (3pt);
\filldraw [black] (-2,-2) circle (3pt);
\draw [very thick] (-1,-2) circle (3pt);
\filldraw [white] (-1,-2) circle (3pt);
\draw [very thick] (0,-2) circle (3pt);
\filldraw [black] (0,-2) circle (3pt);
\draw [very thick] (1,-2) circle (3pt);
\filldraw [white] (1,-2) circle (3pt);
\draw [very thick] (2,-2) circle (3pt);
\filldraw [black] (2,-2) circle (3pt);
\draw [very thick] (3,-2) circle (3pt);
\filldraw [white] (3,-2) circle (3pt);
\draw [very thick] (4,-2) circle (3pt);
\filldraw [white] (4,-2) circle (3pt);
\draw [very thick] (5,-2) circle (3pt);
\filldraw [black] (5,-2) circle (3pt);
\draw [very thick] (6,-2) circle (3pt);
\filldraw [white] (6,-2) circle (3pt);
\draw [very thick] (7,-2) circle (3pt);
\filldraw [white] (7,-2) circle (3pt);
\draw [very thick] (8,-2) circle (3pt);
\filldraw [black] (8,-2) circle (3pt);


\draw [->] (-3,-1.8) to [bend left=30] (3,-1.8);
\draw [->] (2,-1.8) to [bend left=30] (6,-1.8);
\draw [->] (-2,-1.8) to [bend left=30] (-1,-1.8);
\draw [->] (5,-1.8) to [bend left=30] (7,-1.8);


\draw (-3,-4) -- (8.5,-4);
\draw (-3,-4.3) node[below,font=\footnotesize] {$n$};
\draw (7,-4.3) node[below,font=\footnotesize] {$0$};
\draw (8.8,-4) node {...};


\filldraw [black] (-3,-4) circle (3pt);
\draw [very thick] (-2,-4) circle (3pt);
\filldraw [black] (-2,-4) circle (3pt);
\draw [very thick] (-1,-4) circle (3pt);
\filldraw [white] (-1,-4) circle (3pt);
\draw [very thick] (0,-4) circle (3pt);
\filldraw [black] (0,-4) circle (3pt);
\draw [very thick] (1,-4) circle (3pt);
\filldraw [white] (1,-4) circle (3pt);
\draw [very thick] (2,-4) circle (3pt);
\filldraw [black] (2,-4) circle (3pt);
\draw [very thick] (3,-4) circle (3pt);
\filldraw [white] (3,-4) circle (3pt);
\draw [very thick] (4,-4) circle (3pt);
\filldraw [white] (4,-4) circle (3pt);
\draw [very thick] (5,-4) circle (3pt);
\filldraw [black] (5,-4) circle (3pt);
\draw [very thick] (6,-4) circle (3pt);
\filldraw [white] (6,-4) circle (3pt);
\draw [very thick] (7,-4) circle (3pt);
\filldraw [white] (7,-4) circle (3pt);
\draw [very thick] (8,-4) circle (3pt);
\filldraw [black] (8,-4) circle (3pt);


\draw [->] (-3,-3.8) to [bend left=30] (3,-3.8);
\draw [->] (2,-3.8) to [bend left=30] (6,-3.8);
\draw [->] (-2,-3.8) to [bend left=30] (-1,-3.8);
\draw [->] (5,-3.8) to [bend left=30] (7,-3.8);

\end{tikzpicture}
\end{center}
\caption{The coverage process (upper part) and its dual $Y^{(n)}_i,i=0,\ldots,n$  (lower part). The black marks indicate $1$'s in the renewal sequences.}\label{FIG:FP}
\end{figure}

The following proposition will be an essential tool for our proofs.

\begin{prop}\label{theo:rfp}
For any $n\ge1$, $Y_i^{(n)},i=0,\ldots,n$ are the $n$ first random variables of a delayed renewal sequence with aperiodic inter-arrival  $T_{\bf Y}$ having distribution 
\[
\P(T_{\bf Y}\ge k)=\E\prod_{i=1}^{k-1}\alpha_{i-1}^{\xi_i}\,\,,\quad k\ge 2.
\]
 {It is recurrent if, and only if, $\prod_{i\ge0} \alpha_i=0$.} 
\end{prop}
\begin{proof}[Proof of Proposition \ref{theo:rfp}] 
We will omit the upper index ``$(n)$'' since  $n$ is fixed along this proof.
For $i=0,\ldots,n$, let $\bar R_i:=R_{n-i}$, which are also independent copies of $R$. The sequence $\zeta_i:=\xi_{n-i}$ is  a renewal sequence with the same inter-arrival distribution as ${\boldsymbol\xi}$, and undelayed  if $\xi_n=1$. From the definition of the dual,
\begin{align}\label{eq:conta}
\{Y_{i}=1\}&=\{\zeta_i=1\}\cap\{\bar R_{i}\ge\ell_1(Y_{0}^{i-1})+1\wedge i\},
\end{align}
where $\ell_1(Y_0^{i}):=\inf\{j\ge0: \,Y_{i-j}=1\}$ (we use the  condensed notation $a_m^{n}$, $m\le n$ for the string $(a_m,\ldots, a_{n})$).

Using independence between the $\bar R_j$'s and the 
$\zeta_j$'s, we have for any $i=1,\ldots,n$
\begin{align*}
\mathbb{E}({\bf1}\{Y_{i}=1\}|Y_{0}^{i-1})&=\P(\bar R_{i}\ge\ell_1(Y_{0}^{i-1})+1)\E\left({\bf1}\{\zeta_i=1\}|Y_{0}^{i-1}\right).
\end{align*}
On the other hand, $\E({\bf1}\{\zeta_n=1\}|\zeta_0^{n-1})=\E({\bf1}\{\zeta_n=1\}|\ell_1(\zeta_0^{n-1}))$, thus, using the tower property and the fact that $Y_i=1$ implies $\zeta_i=1$, we have
\[
\E\left({\bf1}\{\zeta_i=1\}|Y_{0}^{i-1}\right)=\E\left({\bf1}\{\zeta_i=1\}|\ell_1(Y_{0}^{i-1})\right),
\]
characterising $Y_i$,  $i=0,\ldots,n$, as a renewal sequence. This renewal sequence has a delay if $Y_0\ne1$ (equivalently, $\xi_n\ne1$). Thus the elapsed time until the first occurrence of a $1$ may differ from the elapsed time between two consecutive occurrences of $1$.

We now compute the distribution of the inter-arrival time $T_{\bf Y}$, 
\[
\P(T_{\bf Y} \ge k) =\P(Y_1^{k-1}=0_1^{k-1}|Y_0=1)=\P\left(\bigcup_{a_{1}^{k-1}\in\{0,1\}^{k-1}}     A(a_{1}^{k-1}) \Big| Y_0=1 \right),
\]
where
\[
A(a_{1}^{k-1}) = \{\zeta_{1}^{k-1}=a_{1}^{k-1}\}\cap\bigcap_{i=1}^{k-1}\left\{\bar R_i <i.{\bf 1}\{a_i=1\}+\infty.{\bf1}\{a_i=0\}\right\},
  \]
 is the event that represents that, at each site $i\in\{1,\ldots,k-1\}$ such that $\zeta_i=1$,
we have $\bar R_i <  i$, the symbol $\infty$  means that if
$\zeta_i=0$, the value of $\bar R_i$ does not matter. Thus, using independence
\begin{align*}
\P(T_{\bf Y} \ge k)&=\sum_{a_{1}^{k-1}\in\{0,1\}^{k-1}} \P(\zeta_{1}^{k-1}=a_{1}^{k-1}|\zeta_{0}=1)\alpha_0^{a_1}\ldots \alpha_{k-2}^{a_{k-1}}\\&=\E\left(\prod_{i=1}^{k-1}\alpha_{i-1}^{\zeta_i}\big|\zeta_0=1\right)\\&=\E\prod_{i=1}^{k-1}\alpha_{i-1}^{\xi_i}.
\end{align*}
In the last equality above, recall that the sequence $\xi_i,i\ge0$ starts with $\xi_0=1$, and this sequence has the same distribution as $\zeta_i,i\ge0$ started with $\zeta_0=1$.

To see that it is aperiodic,  observe that 
$\P(T_{\bf Y}=k)=\E[(1-\alpha_{k-1}^{\xi_{k}})\prod_{i=1}^{k-1}\alpha_{i-1}^{\xi_{i}}]\ge\prod_{i=0}^{k-2}\alpha_i\P(\xi_k=1)(1-\alpha_{k-1})$ which is positive if $\P(T=k)>0$.  This means that the period of ${\bf Y}$ is smaller or equal to that of ${\boldsymbol\xi}$, and the latter is aperiodic, thus this is also the case of the former.

{We now prove  recurrence. The renewal sequence is recurrent if, and only if, $
\lim_k\P(T_{\bf Y}\ge k)=0$, that is $\E\prod_{i\ge0}\alpha_i^{\xi_{i+1}}=0.$
Recall the sequence $(T_i)_{i\ge1}$ defined at the beginning of Section \ref{sec:model} and define $(S_n)_{n\ge0}$ through $S_0:=0$ and, for $n\ge1, S_n:=\sum_{k=1}^nT_k$.  Observe that $\xi_i={\bf1}\{\exists n\ge0:S_n=i\}$, $i\ge0$, thus
\[
\E\prod_{i\ge0}\alpha_i^{\xi_{i+1}}=\E\prod_{i\ge0}\alpha_{S_{i+1}}.
\]
The variables $S_1,S_2-S_1,\ldots,S_i-S_{i-1},\ldots$ are i.i.d., and the sequence $\alpha_k$ is a non-decreasing function of $k$, 
thus we can conclude the proof  using Theorem 3(iii) in \citeasnoun{puri/1978} which states that
\[
\E\prod_{i\ge0}\alpha_{S_{i+1}}=0\Leftrightarrow \sum_{k\ge0}(1-\alpha_k)=\infty.
\]
It is well-known that the latter is equivalent to $\prod_{i\ge0}\alpha_i=0$ (recall that $\alpha_0\in(0,1]$).}

\end{proof}

\subsection{Proofs of the results} \label{sec:3.3}

\begin{proof}[Proof of Proposition \ref{prop:obvio}]
{Recall from the  proof of Proposition \ref{theo:rfp} that $\{Y_n=1\}\subset \{\zeta_n=1\}$.  We have $\P(\mathcal{A})=\lim_n\P(Y_n=1)\leq \lim_n\P(\zeta_n=1)$ which vanishes as  $n$ diverges when $\E T=\infty$ by the  Renewal Theorem.}
\end{proof}

\begin{proof}[Proof of Theorem \ref{theo}]
We divide the proof into two cases. 
\begin{itemize}
\item  Case 1: $\prod_{i\ge0}\alpha_i>0$. In this case coverage occurs with probability 0 even when $\xi_i=1$ for any $i\ge1$ (see for instance \citeasnoun{junior/machado/zuluaga/2011}).  {Since $\sum_{n\ge1}\E\prod_{i=0}^{n-1}\alpha_i^{\xi_{i+1}}\ge \sum_{n\ge1}\prod_{i=0}^{n-1}\alpha_i=\infty$ and we have $\P(\mathcal{A})=0$. }
\item  Case 2: $\prod_{i\ge0}\alpha_i=0$.  {Notice that 
\[\P(\mathcal{A})\,=\,\P\left(\cap_n\{\textrm{$n$ is reachable from 0}\}\right) \,=\lim_n\P(\textrm{$n$ is reachable from 0})\]
since $\{\{\textrm{$n$ is reachable from 0}\}, n \ge 1\}$ is a decreasing sequence of sets. } Thus $\P(\mathcal{A})=\lim_n\P(Y^{(n)}_n=1)$. 
As we said, $Y^{(n)}_n$ is delayed, in fact, the random variable $r_n({\boldsymbol\xi}):=\inf\{i\ge0:\xi_{n+i}=1\}$ will specify the distribution of this delay. So we can write 
\begin{align*}
\lim_n\P(Y^{(n)}_n=1)&=\lim_n\sum_{i\ge0}\P(Y^{(n)}_n=1|r_{n}(\boldsymbol\xi)=i)\P(r_{n}(\boldsymbol\xi)=i)\\
&=\sum_{i\ge0}\lim_n\P(Y^{(n)}_n=1|r_{n}(\boldsymbol\xi)=i)\lim_n\P(r_{n}(\boldsymbol\xi)=i).
\end{align*}
Now, recall that $\boldsymbol\xi$ is positive recurrent, and thus $\P(r_{n}(\boldsymbol\xi)=i)$ converges to the stationary measure of the string $0\ldots01$ of size $i$ (which reduces to symbol $1$ when $i=0$). Denoting it by $\mu_i$, we have in particular $\sum_{i\ge0}\mu_i=1$.  On the other hand, for each $i\ge0$, the renewal sequence $Y^{(n)}_j,j=0,\ldots,n$ has a fixed delay distribution.  No matter what this distribution is, the Renewal Theorem guarantees that $\P(Y^{(n)}_n=1|r_{n}(\boldsymbol\xi)=i)$ converges to 
$(\E T_{\bf Y})^{-1}$. Thus we conclude that
$$\P(\mathcal{A})=\frac{1}{1+\sum_{k\ge1}\E\left[\prod_{i=0}^{k-1}\alpha_i^{\xi_{i+1}}\right]}.$$ 
\end{itemize}\end{proof}

\begin{proof}[Proof of Proposition \ref{coro:1}]
An upper bound  for $\P(\mathcal{A})$ follows easily from Theorem \ref{theo} and Jensen inequality
\begin{align*}
\E\prod_{i=0}^{k-1}\alpha_i^{\xi_{i+1}}&\ge\prod_{i=0}^{k-1} \alpha_i^{\P(\xi_{i+1}=1)}.
\end{align*}
Therefore, 
\begin{equation}
\P(\mathcal{A})\le \left(1+\sum_{k\ge0}\prod_{i=0}^{k-1} \alpha_i^{\P(\xi_{i+1}=1)}\right)^{-1}
\end{equation}
and a sufficient condition for a.s. extinction is that $\sum_{k\ge0}\prod_{i=0}^{k-1} \alpha_i^{\P(\xi_{i+1}=1)}=\infty$. The Raabe criterion states, for non-negative real sequences $u_k,\,k\ge1$, that if $\limsup k\left(u_k/u_{k+1}-1\right)<1$, then $\sum_ku_k=\infty$. In our case, it is therefore enough that
\[
\limsup\, k\left(\frac{1}{\alpha_{k}^{\P(\xi_{k+1}=1)}}-1\right)<1.
\]
By the Renewal Theorem, $\P(\xi_{k+1}=1)$ converges to $(\E T)^{-1}$. Moreover, $\alpha_k$ increases to $1$. Thus,  it is enough that
\[
\limsup\, k(1-\alpha_{k}^{(\E T)^{-1}})<1.
\]
Using Taylor's expansion, for large $k$, $\alpha_{k}^{(\E T)^{-1}}= 1-(\E T)^{-1}(1-\alpha_k)\left[1-O(1-\alpha_k)\right]$ and we conclude that a sufficient condition for a.s. extinction is
\[
\limsup\, k(\E T)^{-1}(1-\alpha_{k})<1.
\]
\end{proof}

Once we get a lower bound for $\P(\mathcal{A})$, the proof of  Proposition \ref{coro:2} follows the same lines as the proof of Proposition \ref{coro:1}. However, it is trickier to get a useful lower bound in general. We present such a lower bound in Lemma \ref{lem:concentration1}, whose proof, rather lengthy, follows essentially the same arguments of the
concentration inequalities obtained in
\citeasnoun{chazottes/collet/kulske/regig/2007}. However, there are
some slight differences which are of substantial importance and make
the relation less explicit, thus, we will give the proof in Appendix
\ref{sec:concentration}. Before stating the lemma, we need to introduce a coupling of renewal sequences starting from different delays. For any pair of integers $i,j\ge 0$,  we will denote by $(\tilde{\boldsymbol\xi}^{(i)},\tilde{\boldsymbol\xi}^{(j)})$  a coupling between two renewal processes with the same inter-arrival distribution, one with a delay of size $i$ and the other with a delay of size $j$. It is convenient to define this coupling as a coordinate-wise function of a coupling $(\tilde{\boldsymbol\zeta}^{(i)},\tilde{\boldsymbol\zeta}^{(j)})$ of their backward recurrence time chains on $\N$ \cite[3.3.1]{meyn/tweedie/2009} with common transition matrix $Q$
\[
Q(n,n+1)=q_n=1-Q(n,0), \,\,\,n\ge0,
\]
and starting from states $i$ and $j$ respectively.  The coupling uses  a single sequence of i.i.d. random variables $(U_i)_{i\ge1}$ uniformly distributed in $[0,1[$ ({let $\P_{\bf U}$ denote the law of this sequence}) and is defined iteratively as follows:
 $\left(\tilde\zeta^{(i)}_{0},\tilde\zeta^{(j)}_{0}\right)=(i,j)$ and 
\[
\left(\tilde\zeta^{(i)}_{k},\tilde\zeta^{(j)}_{k}\right)=\left((\tilde\zeta^{(i)}_{k-1}+1){\bf1}\left\{ U_k{\le} q_{\tilde\zeta^{(i)}_{k-1}}\right\},(\tilde\zeta^{(j)}_{k-1}+1){\bf1}\left\{ U_k{\le} q_{\tilde\zeta^{(j)}_{k-1}}\right\}\right), 
\]
for any $k\ge1$.  Then, we naturally obtain $(\tilde{\boldsymbol\xi}^{(i)},\tilde{\boldsymbol\xi}^{(j)})$ by defining for any $k\ge0$
\[
\left(\tilde\xi^{(i)}_{k},\tilde\xi^{(j)}_{k}\right)=\left({\bf1}\{\tilde\zeta^{(i)}_{k}=0\},{\bf1}\{\tilde\zeta^{(j)}_{k}=0\}\right).
\]
Based on this coupling, we now define the random variable 
\[
\tau_{i,j}=\inf\{l\ge1:\tilde\xi^{(i)}_l=\tilde\xi^{(j)}_l=1\} = \inf\{l\ge1:\tilde\zeta^{(i)}_l=\tilde\zeta^{(j)}_l=0\}.
\]

\begin{lemma}\label{lem:concentration1}
We have
\begin{align*}
\E\prod_{i=0}^{k-1}\alpha_i^{\xi_i}\leq e^{C_k}\prod_{i=0}^{k-1}\alpha_i^{\P(\xi_i=1)}
\end{align*}
where 
\[
C_k:=\frac{1}{8}\sum_{i=1}^k\left(\sum_{j=i}^k\sup_{l=1,\ldots,i}\P_{\bf U}(\tau_{0,l}\ge j)|\log\alpha_{j-1}|\right)^2.
\]
\end{lemma}

We are now ready to prove Proposition \ref{coro:2}.

\begin{proof}[Proof of Proposition \ref{coro:2}]
We have to prove that the sum in $k$ in the rhs of Lemma \ref{lem:concentration1} is
summable. The Raabe criterion states, for non-negative real sequences $u_k,\,k\ge1$, that if $\liminf k\left(u_k/u_{k+1}-1\right)>1$, then $\sum_ku_k<\infty$. We already know that
$\liminf_{n\rightarrow\infty} n(\E T)^{-1}(1-\alpha_n)>1$ (which is
the assumption of the proposition) makes
$\sum_{k\ge1}\prod_{i=0}^{k-1}\alpha_i^{\P(\xi_{i+1}=1)}$
summable. Thus, to conclude the statement of the lemma, it only remains to prove that, under this assumption, 
\begin{equation}\label{eq:vanishes}C_k-C_{k+1}\rightarrow0.\end{equation}

Consider  any sequence $\alpha_n,\,n\ge0$, satisfying 
$\liminf_{n\rightarrow\infty} n(\E T)^{-1}(1-\alpha_n)>1$ 
as specified by the corollary. {Then, there exists some $K$ such that  $\alpha_{k} < 1 - \E T /k$ for $k\ge K$. Therefore, it is enough to prove that \eqref{eq:vanishes} vanishes with the sequence $\bar\alpha_k$ defined through
  $\bar\alpha_{k}=\alpha_k$ for $k<K$ and $\bar\alpha_{k}= 1- \E T/k$ for $k\ge K$. This is because if coverage occurs with
  positive probability with the sequence $P(\bar R\le k)=\bar\alpha_k$, $k\ge0$, by an
  obvious coupling argument (using $R_i\ge \bar R_i$, $i\ge0$) it
  survives also with the sequence $P(R\le k)=\alpha_k$, $k\ge0$. 
  
  So we now prove that, under condition \eqref{eq:ok}, \eqref{eq:vanishes} holds with $\bar\alpha_k,k\ge0$ instead of $\alpha_k,k\ge0$. Let
  \[
A_{i,k}:=\sum_{j=i}^k\sup_{l=1,\ldots,i}\P_{\bf U}(\tau_{0,l}\ge j)|\log\bar\alpha_{j-1}|.
\]
Simple calculations show that
\begin{align*}
C_k-C_{k+1}&=\sum_{i=1}^k(A_{i,k}-A_{i,k+1})(A_{i,k}+A_{i,k+1})-\left(\sup_{l=1,\ldots,k+1}\P_{\bf U}(\tau_{0,l}\ge k+1)|\log\bar\alpha_{k+1}|\right)^2.
\end{align*}
By the choice of $\bar\alpha_k$, it is enough to prove that $\sum_{i=1}^k(A_{i,k}-A_{i,k+1})(A_{i,k}+A_{i,k+1})$ vanishes. Observe that
\[
A_{i,k}-A_{i,k+1}=-\sup_{l=1,\ldots,i}\P_{\bf U}(\tau_{0,l}\ge k+1)|\log\bar\alpha_{k}|\]
and that
\[
A_{i,k}+A_{i,k+1}=2\sum_{j=i}^k\sup_{l=1,\ldots,i}\P_{\bf U}(\tau_{0,l}\ge j)|\log\bar\alpha_{j-1}|+\sup_{l=1,\ldots,i}\P_{\bf U}(\tau_{0,l}\ge k+1)|\log\bar\alpha_{k}|
\]
It follows that $-\sum_{i=1}^k(A_{i,k}-A_{i,k+1})(A_{i,k}+A_{i,k+1})$ equals
\[
2\sum_{i=1}^k\sup_{l=1,\ldots,i}\P_{\bf U}(\tau_{0,l}\ge k+1)|\log\bar\alpha_{k}|\sum_{j=i}^k\sup_{l=1,\ldots,i}\P_{\bf U}(\tau_{0,l}\ge j)|\log\bar\alpha_{j-1}|
\]
\[
+\sum_{i=1}^k\left(\sup_{l=1,\ldots,i}\P_{\bf U}(\tau_{0,l}\ge k+1)|\log\bar\alpha_{k}|\right)^2.
\]
The second term vanishes since
\[
\sum_{i=1}^k\left(\sup_{l=1,\ldots,i}\P_{\bf U}(\tau_{0,l}\ge k+1)|\log\bar\alpha_{k}|\right)^2\le k(\log\bar\alpha_{k})^2\rightarrow0
\]
by our choice of $\bar\alpha_k,k\ge1$. Notice that the first term is bounded above by
\begin{align*}
&2|\log\bar\alpha_{k}|\sum_{i=1}^k\sum_{j=i}^k\sup_{l=1,\ldots,k}\P_{\bf U}(\tau_{0,l}\ge j)|\log\bar\alpha_{j-1}|\\
=&2|\log\bar\alpha_{k}|\sum_{j=1}^kj\sup_{l=1,\ldots,k}\P_{\bf U}(\tau_{0,l}\ge j)|\log\bar\alpha_{j-1}|.
\end{align*}
So, by our choice of $\bar\alpha_k,k\ge1$, a sufficient condition for this term to vanish is that
\begin{equation}\label{eq:general}
\sum_{j=1}^k\sup_{l=1,\ldots,k}\P_{\bf U}(\tau_{0,l}\ge j)=o(k).
\end{equation}
But observe that for any $\ell\in\{0,\ldots,k\}$,
\[
\tau_{0,\ell}:=\inf\{l\ge1:\tilde\xi^{(0)}_l=\tilde\xi^{(\ell)}_l=1\}\le \inf\{l\ge 1: \tilde\xi^{(i)}_l=1\,,i=0,\ldots, k \}=:{{\theta_k}}.
\]
Here, the formal definition of ${\theta_k}$ involves the coupling of $k+1$ renewal processes (or equivalently, of their backward recurrence time chains) instead of only two as we introduced above. However, the extension of this coupling to any finite (or even infinite) number of renewal processes is straightforward.  Thus, 
$$\sup_{\ell=1,\ldots,k}\P_{\bf U}(\tau_{0,\ell}\ge j)\leq \P_{\bf U}({\theta_k}\ge j).$$

 It only remains to show that $\sum_{j=1}^k\P_{\bf U}({\theta_k}\ge j)=o(k)$ under condition \eqref{eq:ok}.
 We recall that $q^\star_i:=\sup_{n\le i}q_n$, thus, by construction,
\[
\{{\theta_k}\ge j\}\subset\cap_{i=1}^{j-1}\{U_i\le q^\star_{k+i-1}\},
\]
since if $U_i\ge q^\star_{k+i-1}$ for some $i=1,\ldots, j-1$, then $U_i\ge q_{\xi^{(n)}_{i-1}}$ for $n=0, \ldots,k$, and therefore  $\xi^{(n)}_{i}=1$ for $n=0,\ldots,k$ (all the renewal processes merge at time $i<j$). Thus
\[
\sup_{\ell=1,\ldots,k}\P_{\bf U}(\tau_{0,\ell}\ge j)\leq \P_{\bf U}({\theta_k}\ge j)\le \P_{\bf U}\left(\cap_{i=1}^{j-1}\{U_i\le q^\star_{k+i-1}\}\right)=\prod_{i=k}^{k+j-2}q_i^\star
\]
concluding the proof of the proposition.
}

\end{proof}

\section{Discussion, interpretations and related models}\label{sec:discussion}
We begin this section with a discussion concerning Condition \eqref{eq:ok}. Then, we will explain how to get explicit bounds for the probability of coverage. Finally, we relate the model to  the firework process, a rumor process on $\N$. 
\subsection{About condition (\ref{eq:ok})} 

Condition \eqref{eq:general} that we used in the proof is strictly weaker than condition \eqref{eq:ok} as will show the two first examples below. We then give the calculations for the polynomial example, and conclude with an example that we could not handle with our technics.

\subsubsection{Doeblin for large $i$'s}

The condition \eqref{eq:general} that we obtained in our proofs is strictly weaker than condition \eqref{eq:ok}. 
Indeed, if $q_i=1$ for some $i$, then condition \eqref{eq:ok} cannot be satisfied, while condition \eqref{eq:general} may still be satisfied. Suppose there exists $i^\star\ge0$ such that $\epsilon\le q_k\le 1-\epsilon$ for any $k\ge i^\star$.  It can be checked that  $\alpha:=\inf_{i\ge0} Q^{i^{\star}+1}(i,0)>0$ (where $Q$ is the transition matrix of the backward recurrence time chain as before). This means that we can couple the $k+1$ Markov chain in such a way that each $i^\star+1$ steps, every coupled Markov chains have a positive probability, at least $\alpha$, to meet (and therefore merge) at state $0$. Thus
\[
\left(\sum_{j=1}^{k}\P_{\bf U}({\theta_k}\ge j)\right)^2\le \left(\sum_{j\ge1}\P_{\bf U}({\theta_k}\ge j)\right)^2\le \left(\sum_{k\ge1}(i^\star+1)(1-\alpha)^{k-1}\right)^2<\infty.
\]

\subsubsection{What happens when $q_i=0$ for some $i$?}
Something that we have not considered so far is the somewhat pathological case in which $q_i=0$ for some $i$. Obviously, if $q_0=0$ then ${\boldsymbol\xi}$ is a sequence of $1$'s. But if we suppose that $\bar i\ge1$ is the smallest integer such that $q_{\bar{i}}=0$, then ${\boldsymbol \xi}$ is a $\bar i+1$-steps Markov chain, with the property that the distance separating two consecutive $1$'s is at most $\bar i$. In particular, the backward recurrence time chains are  recurrent and aperiodic Markov chains  on a finite state space $\{0,\ldots,\bar i\}$, and as such, always satisfy \eqref{eq:general} with no need to assume \eqref{eq:ok}. Thus, Proposition \ref{coro:2} always holds in this case. 

\subsubsection{The polynomial case} 

As promised, we show here that  the polynomial case $q_i=1-\frac{1}{i^\beta}$, $i\ge2$ satisfies condition \eqref{eq:ok}. The process is positive recurrent iff 
$0<\beta<1$. By monotonicity, we have to show
\[
\sum_{j=1}^{k}\prod_{i=k}^{k+j-2}q_i=o(k).
\]
We have 
\begin{align*}
\prod_{i=k}^{k+j-2}\left(1-\frac{1}{i^\beta}\right)\leq \exp\left(-\sum_{i=k}^{k+j-2}\frac{1}{i^\beta}\right)\le\exp\left(-\int_{i=k}^{k+j-1}\frac{1}{x^\beta}dx\right).
\end{align*}
Thus, there exists  positive constant $c$ (depending on $\beta$) such that, for any sufficiently large $k$ we have
\begin{align*}
\sum_{j=1}^{k}\prod_{i=k}^{k+j-2}\left(1-\frac{1}{i^\beta}\right)\le e^{ck^{1-\beta}}\sum_{j=k+1}^{2k-1}e^{-cj^{1-\beta}}\le e^{ck^{1-\beta}}\int_{k}^{\infty}e^{-cx^{1-\beta}}dx.
\end{align*}
After a change of variables in order to reveal the upper incomplete Gamma function  $\Gamma(s,x):=\int_{x}^{\infty}t^{s-1}e^{-t}dt$, we get
\[
\int_{k}^{\infty}e^{-cx^{1-\beta}}dx= C.\Gamma\left(\frac{1}{1-\beta},{ck^{1-\beta}}\right)
\]
for some positive constant $C$  (depending on $\beta$).
Using now the well-known fact that $\Gamma(s,x)x^{1-s}e^{x}\rightarrow1$ as $x\rightarrow\infty$ when $s>0$, we have
\[
\sum_{j=1}^{k}\prod_{i=k}^{k+j-2}\left(1-\frac{1}{i^\beta}\right)ck^{-\beta}\le e^{ck^{1-\beta}}c^{-\beta/(1-\beta)}k^{-\beta}C\Gamma\left(\frac{1}{1-\beta},{ck^{1-\beta}}\right)\rightarrow c^{-\beta/(1-\beta)}.
\]
In other words, condition \eqref{eq:ok} holds for any $\beta\in(0,1)$.
%
%
%
 
\subsubsection{An example that does not fit our conditions}

We conclude with an example that does not fit our conditions. Consider the case $q_i=\left(\frac{i+1}{i+2}\right)^\beta$, $i\ge1$ with $\beta>1$. Using   monotonicity,
\[
\sum_{j=1}^{k}\prod_{i=k}^{k+j-2}q_i^\star=\sum_{j=1}^{k}\prod_{i=k}^{k+j-2}q_i=\sum_{j=1}^{k}\left(\frac{k+1}{k+j}\right)^{\beta}=(k+1)^\beta\sum_{j=k+1}^{2k}\frac{1}{j^\beta}
\]
which is of order $k$, and therefore not $o(k)$.  Using condition \eqref{eq:general} would give exactly the same calculations. 
 This case is therefore not covered by our results.

\subsection{Explicit  bounds for the probability of coverage}

The upper bound obtained in the above proofs together with Lemma \ref{lem:concentration1} yield
\[
\left(1+\sum_{k\ge1}\prod_{i=0}^{k-1}\alpha_i^{\P(\xi_{i+1}=1)}e^{C_k}\right)^{-1}\le \P(\mathcal{A})\leq \left(1+\sum_{k\ge1}\prod_{i=1}^{k-1} \alpha_i^{\P(\xi_{i+1}=1)}\right)^{-1}
\]
where $C_k$ is defined by \eqref{eq:ok}.
So explicit bounds rely on the ability to obtain explicit bounds on $C_k$ and $\P(\xi_{i}=1)$. These are easy to obtain in the Markov case (the case in which $q_l=q_1$ for all $l\ge1$) for instance, where it is well-known that (denoting $\epsilon:=\min\{q_0,q_1,1-q_0,1-q_1\}$)
\[C_k\leq \sum_{j=1}^{k}(1-\epsilon)^{j-1}\]
and
\[\P(\xi_i=1)=\frac{1}{1-q_1+q_0}\left(q_0+(1-q_1)(q_1-q_0)^{i+1}\right).\]
In the general non-Markovian case, several bounds can be found in the literature for the renewal probability $\P(\xi_i=1)$, but they are generally asymptotic and we do not pursue here these considerations. 

Let us only mention that in the case where the $q_i$'s are monotonically increasing (see two examples in the preceding subsection), the measure of the renewal process has the ``monotonicity'' property (see e.g. \citeasnoun{georgii/haggstrom/maes/2001}) allowing to use the FKG inequality  $\E\prod_{i=0}^{k-1}\alpha_i^{\xi_{i+1}}\ge \prod_{i=0}^{k-1}\E\alpha_i^{\xi_{i+1}}$. Thus

\[
\P(\mathcal{A})\leq\left(1+\sum_{n\ge1}\prod_{i=0}^{n-1}[1-\P(\xi_{i+1}=1)(1-\alpha_i)]\right)^{-1}.
\]
This upper bound is tight because we know that the i.i.d. case satisfies the monotonicity condition and by Corollary \ref{coro:i.i.d} it reaches this upper bound.

\subsection{Extended Reverse Firework processes }\label{sec:FP} When ${\boldsymbol \xi}$ is identically $1$ or i.i.d., the  model described in Section \ref{sec:model} has been studied under the name Firework Process (FP) by
\citeasnoun{junior/machado/zuluaga/2011} and
\citeasnoun{gallo/garcia/junior/rodriguez/2014} among others. {These
works also studied another rumor process called Reverse
Firework Process (RFP) which, as observed in \citeasnoun{gallo/garcia/junior/rodriguez/2014} (Section 5, Item (1)), bears a strong relationship with the FP. 

Our coverage model is an extension of the FP in which we use a  renewal sequence ${\boldsymbol \xi}$. The corresponding extension of the RFP can be described as follows. We start with an undelayed renewal process $\boldsymbol{\zeta}$ and define the random set of integers $\mathcal{S}:=\{i\ge0:\zeta_i=1\}$. Associate to each site $i\ge0$ an independent copy $L_i$ of some $\N$-valued random variable $L$. At step 0,  there is some information at the origin, let $B_0:=\{0\}$ and at step $n\ge1$, define recursively the set of \emph{newly} informed individuals 
\[B_n:=\{i\in\mathcal{\loca}:\,\,\{i-L_i,\ldots,i-1\}\cap B_{n-1}\neq\emptyset\}\setminus \cup_{j=0}^{n-1}B_{j}.\]
Pictorially, an individual is informed if its radius, sent towards its left, covers an informed individual. {Thus $i\in\cup_jB_j$ if and only if ``$\zeta_i=1$ and  there exists $l<i$ such that $Z_l=1$ and $L_i\ge i-l$''. Recalling \eqref{eq:conta}, we observe that for any $n\ge1$, $Z_i,i=0,\ldots,n$  has the same distribution as the dual $Y^{(n)}_i,i=0,\ldots,n$ when $Y^{(n)}_0=1$, that is, when the later is undelayed. }

The moral of what we said above is that the reverse firework process has the same distribution as the dual of the firework process. The following result is therefore a direct consequence of Proposition \ref{theo:rfp}.


\begin{coro}\label{transience}
The renewal reverse firework process survives if, and only if, $\prod_{k\ge0}\alpha_k=0$.
\end{coro}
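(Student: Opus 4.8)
The plan is to reduce survival of the renewal reverse firework process to the properness of the inter-arrival time $T_{\bf Y}$ of the dual renewal sequence ${\bf Y}$ supplied by Proposition \ref{theo:rfp}, and then to analyse the limit of $\P(T_{\bf Y}\ge k)$. Since ${\bf Y}$ is an undelayed renewal sequence, it has infinitely many $1$'s (i.e.\ the process survives) if and only if $T_{\bf Y}$ is proper, that is $\P(T_{\bf Y}=\infty)=0$. Using the expression $\P(T_{\bf Y}\ge k)=\E\prod_{i=1}^{k-1}\alpha_{i-1}^{\xi_i}$ obtained inside the proof of Proposition \ref{theo:rfp}, and noting that the integrand decreases in $k$, monotone convergence gives
\[
\P(T_{\bf Y}=\infty)=\lim_{k}\P(T_{\bf Y}\ge k)=\E\prod_{i\ge1}\alpha_{i-1}^{\xi_i}=\E\prod_{m\ge1}\alpha_{S_m-1},
\]
where $0=S_0<S_1<S_2<\cdots$ are the renewal epochs of ${\boldsymbol\xi}$, so that $\{i\ge1:\xi_i=1\}=\{S_m:m\ge1\}$. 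Thus survival is equivalent to $\prod_{m\ge1}\alpha_{S_m-1}=0$ almost surely, i.e.\ to $\sum_{m\ge1}(1-\alpha_{S_m-1})=\infty$ almost surely.

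The easy implication is that $\prod_{k\ge0}\alpha_k>0$ forces non-survival: since $\{S_m-1:m\ge1\}\subset\N$ and every $\alpha_j\le1$, one has the deterministic bound $\prod_{m\ge1}\alpha_{S_m-1}\ge\prod_{k\ge0}\alpha_k>0$, whence $\P(T_{\bf Y}=\infty)>0$. By contraposition, survival implies $\prod_{k\ge0}\alpha_k=0$. For the converse I would assume $\prod_{k\ge0}\alpha_k=0$, equivalently $\sum_{k\ge0}(1-\alpha_k)=\infty$, and show $\sum_{m\ge1}(1-\alpha_{S_m-1})=\infty$ a.s. Writing $\beta_k:=1-\alpha_k$, which is non-increasing since $\alpha_k$ is a distribution function, I would group the divergent series $\sum_k\beta_k$ along the renewal blocks $[S_m,S_{m+1})$ and use monotonicity of $\beta$ to obtain the deterministic bound $\sum_{k\ge0}\beta_k\le\sum_{m\ge0}T_{m+1}\,\beta_{S_m}$, with $T_{m+1}:=S_{m+1}-S_m$; hence $\sum_{m\ge0}T_{m+1}b_m=\infty$ for $b_m:=\beta_{S_m}$, a non-increasing nonnegative sequence.

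It then suffices to prove the claim that, for i.i.d.\ gaps $(T_m)$ with $\E T<\infty$ (our standing assumption) and any non-increasing $b_m\ge0$, one has $\sum_m T_{m+1}b_m=\infty\Rightarrow\sum_m b_m=\infty$ a.s.; this yields $\sum_m\beta_{S_m}=\infty$ and, since $\beta_{S_m-1}\ge\beta_{S_m}$, the desired $\sum_m(1-\alpha_{S_m-1})=\infty$. The main obstacle is exactly this claim: the $T_{m+1}$-weights could a priori concentrate the divergence on a few large gaps, so passing from the weighted to the unweighted series requires controlling the growth of $S_m$. I would argue by contradiction on the event $F:=\{\sum_m b_m<\infty\}$.

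On $F$ the non-increasing summable sequence satisfies $m b_m\to0$ and, through the identity $\sum_{m=1}^M m(b_{m-1}-b_m)=\sum_{m=0}^{M-1}b_m-Mb_M$, also $\sum_m m(b_{m-1}-b_m)<\infty$. By the strong law of large numbers $S_m/m\to\E T$ a.s., so on a full-probability event $S_m\le C_\omega m$ for all $m$. Summation by parts, $\sum_{m=0}^{M}T_{m+1}b_m=S_{M+1}b_M+\sum_{m=1}^{M}S_m(b_{m-1}-b_m)$, then bounds the left-hand side by $C_\omega(M+1)b_M+C_\omega\sum_{m=1}^{M}m(b_{m-1}-b_m)$, which stays bounded on $F$ as $M\to\infty$. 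This contradicts $\sum_m T_{m+1}b_m=\infty$, forcing $\P(F)=0$ and completing the argument. The borderline situation $\alpha_k\not\to1$ is trivial, since then $b_m$ does not tend to $0$ and $\E T<\infty$ already produces infinitely many renewals, so $\sum_m b_m=\infty$ at once; the contradiction argument above handles the remaining case $\alpha_k\to1$.
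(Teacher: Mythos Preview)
Your argument is correct. Both you and the paper start from the same observation: by Proposition~\ref{theo:rfp} the dual ${\bf Y}$ is an undelayed renewal sequence, and survival is equivalent to properness of $T_{\bf Y}$, i.e.\ to $\E\prod_{i\ge1}\alpha_{i-1}^{\xi_i}=\E\prod_{m\ge1}\alpha_{S_m-1}=0$. From there the paths diverge. The paper simply recognises this last expectation as an instance of Theorem~3(iii) of \citeasnoun{puri/1978} (on infinite products of a nondecreasing function evaluated along a renewal sequence), which yields the equivalence with $\sum_k(1-\alpha_k)=\infty$ in one line. You instead supply a self-contained elementary proof of that equivalence: the easy direction via the trivial inclusion $\{S_m-1\}\subset\N$, and the hard direction via the blockwise bound $\sum_k\beta_k\le\sum_m T_{m+1}\beta_{S_m}$ together with an Abel-summation argument and the strong law $S_m\le C_\omega m$ to show that $\sum_m\beta_{S_m}<\infty$ would force $\sum_m T_{m+1}\beta_{S_m}<\infty$. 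Your route is longer but avoids the external reference and makes transparent exactly where $\E T<\infty$ and the monotonicity of $\alpha$ are used; the paper's route is cleaner for a reader willing to look up Puri's result. Either way the mathematical content is the same equivalence $\E\prod_m\alpha_{S_m-1}=0\Leftrightarrow\prod_k\alpha_k=0$.
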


The same result was obtained in \citeasnoun{junior/machado/zuluaga/2011} (case where
$\xi_k=1$ for all $k\ge1$) and in
\citeasnoun{gallo/garcia/junior/rodriguez/2014} (case where
${\boldsymbol \xi}$ is an i.i.d. sequence). 
Theorem 3 in \citeasnoun{gallo/garcia/junior/rodriguez/2014} gives
much more information concerning the reverse firework process
(central limit theorem for instance). {We will not discuss these results here, but mention that since these properties follow
directly from the fact that the FP is a renewal process, they are all
inherited by the reverse firework process since, by Proposition \ref{theo:rfp},
the RFP is also a renewal process. }

%

\subsection{Queueing systems}
The renewal process ${\bf \xi}$ which dictates the origin of each intervals can be seen as an arrival process of individuals in a queueing system with infinitely many servers, and the length of the intervals associated to each mark of the renewal process can be seen as the service time of the corresponding individual. {Therefore, we can view our process as  a discrete time GI/G/$\infty$ queue with infinite expected service time, a regime that is rarely studied  by the queueing theory community. } \\

\noindent{\bf Acknowledgments} {Sandro Gallo thanks FAPESP 2015/09094-3 and research fellowship CNPq (312315/2015-5). Nancy Garcia thanks FAPESP 2014/26419-0 and research fellowship CNPq 302598/2014-6. This article was produced as part of the activities of FAPESP  Research, Innovation and Dissemination Center for Neuromathematics (grant \#2013/07699-0 , S\~ao Paulo Research Foundation) and Edital Universal CNPq (480108/2012-9 and 462064/2014-0).}

\bibliography{sandrobibli}

\appendix
\section{About renewal sequences }\label{sec:renewal}

Here we list some  basic and well-known facts concerning renewal sequences that we use in our proofs. Let ${\boldsymbol\eta} = \{\eta_i, i \in \N \}$ be a renewal sequence with inter-arrival time $T$.

If it starts with $\eta_0=1$, it is called undelayed.  It is well-known that this sequence is positive recurrent if, and only if, $\E T<\infty$, and, as far as $T$ has a proper distribution, the Renewal Theorem states that $\lim_n \P(\eta_n=1)=1/\E T$ (use $1/\infty=0$). 

We can also suppose that ${\boldsymbol\eta}$ is delayed, that is, there is a $\N$-valued random variable $T_0$ which gives the time elapsed until the first occurrence of a $1$. That is, for $i\ge0$, $\eta_i=1$ if there exists $n\ge1$ such that $\sum_{k=0}^nT_k=i$, and we recover an undelayed sequence by putting $T_0\equiv 0$. As long as $T_0$ is a proper random variable, the Renewal Theorem also holds, and the limit is the same, no matter the delay, that is $\lim_n \P(\eta_n=1)=1/\E T$. 

 {There are two other important observations concerning renewal sequences, delayed or not, that we  use in the present paper. 
\begin{enumerate}[(i)]
\item  For $k\in\{1,\ldots,n\}$, ${\boldsymbol \eta}$ has conditional probabilities 
\begin{equation}\label{eq:altern_def}
\mathbb{P}(\eta_n=1|\eta_0^{n-1}=a_0^{n-1})=\P(T= k|T\ge k)
\end{equation}
for any string $a_0^{n-1}$  of symbols of $\{0,1\}$  such that $a_{n-k}=1$ and $a_{n-k+i}=0$ for $i=1,\ldots,k-1$. In other words, the conditional probability of time $n$ given the whole past  depends only on the distance since the last occurrence of a $1$ in the realisation, that is, 
\[
\E({\bf1}\{\eta_n=1\}|\eta_0^{n-1})=\E({\bf1}\{\eta_n=1\}|\ell_1(\eta_0^{n-1})).
\]
where $\ell_1(\eta_0^{i}):=\inf\{j\ge0: \,\eta_{i-j}=1\}$. This is  a property that characterises $1$ as a renewal event for the sequence. Naturally,  the sequence ${\boldsymbol \eta}$ can be constructed by applying recursively the conditional probabilities \eqref{eq:altern_def}, beginning with $\eta_0=1$ if it is undelayed, and from $T_0$ if it is delayed. So, together with the distribution of $T_0$, these conditional probabilities define the process univocally. 

\item  ${\boldsymbol \eta}$ has the following reversibility property: 
for any  $n\ge1$, the law of the sequence $\zeta^{(n)}_i:=\eta_{n-i},i=0,\ldots,n$ is that of a renewal process with the same inter-arrival distribution. Moreover, if ${\boldsymbol \eta}$ is undelayed, on the event that $\eta_n=1$, the sequence $\zeta^{(n)}_i,i=0,\ldots,n$ has the same distribution as $\eta_i,i=0,\ldots,n$. These property are easy to obtain when we observe that these sequences are constructed \emph{via} a concatenation of blocks of the form $0\ldots01$, of independent size equally distributed with $T$. 
\end{enumerate}}

\section{Proof of Lemma \ref{lem:concentration1}} \label{sec:concentration}

The proof will be done in three parts. First we explain the martingale difference method used to obtain concentration inequalities. At some point, we will need to estimate the probability of discrepancy between coupled delayed renewal processes, and thus, the second part of the proof is the explicit construction of a coupling of these processes. The proof of the lemma is concluded in a third step. 

\begin{notalert}
For notational simplicity, we will translate the indexes of one unit along the present section, to study $\E\prod_{i=1}^{k}\alpha_i^{\xi_i}$  instead of $\E\prod_{i=0}^{k-1}\alpha_i^{\xi_{i+1}}$.
\end{notalert}

\paragraph{The method of martingale difference.}\cite[Section 4]{mcdiarmid/1989} Let  $g(\xi_1,\ldots,\xi_{k}):=\sum_{i=1}^{k} \xi_i\log\alpha_i$. We will upper-bound 
$\E(e^{g-\E g})$. Let, for $i=1,\ldots,k$
\[\Delta_i:=\E(g|\mathcal{F}_{1}^{i})-\E(g|\mathcal{F}_{1}^{i-1}),
\]
where, for $i\ge1$, $\mathcal{F}_1^i$ is the $\sigma$-algebra generated by $\xi_j,j=1,\ldots,i$ and $\mathcal{F}_1^0$ is the trivial $\sigma$-algebra. These quantities sum telescopically in $i$ to $\sum_{i=1}^{k}\Delta_i(\xi_{1}^{i})=g(\xi_{1}^{k})-\E g(\xi_1^k)$.
Since $\mathcal{F}_1^{i-1}\subset\mathcal{F}_1^{i}$, we have $\E (\Delta_i|\mathcal{F}_{1}^{i-1})=
0$ which means that  $\Delta_i$, $i\ge1$ forms a martingale difference sequence. If  there exists, for any $i\ge1$, a finite real number $d_i$ such that $|\Delta_i|\leq d_i$ a.s., we can use the  Azuma-Hoeffding inequality \cite[proof of Lemma 4.1]{azuma/1967} which states
\begin{equation}\label{eq:hoef}
\E e^{g-\E g}\leq e^{\frac{1}{8}\sum_{i=1}^kd_i^2}.
\end{equation}
To upper bound $\sum_id_i^2$, let us compute, for $i=1,\ldots,k-1$
\begin{align*}
\Delta_i(\xi_{1}^{i})=&\sum_{u_{i+1}^{k}\in\{0,1\}^{k-i}}\P(u_{i+1}^{k}|\xi_{1}^{i})g(\xi_1^iu_{i+1}^{k})-\sum_{u_{i}^{k}\in\{0,1\}^{k-i+1}}\P(u_{i}^{k}|\xi_{1}^{i-1})g(\xi_1^{i-1}u_{i}^{k})\\
\leq&|\sum_{u_{i+1}^{k}}\P(u_{i+1}^{k}|\xi_{1}^{i-1}1)g(\xi_1^{i-1}1u_{i+1}^{k})-\sum_{u_{i+1}^{k}}\P(u_{i+1}^{k}|\xi_{1}^{i-1}0)g(\xi_1^{i-1}0u_{i+1}^{k})|
\end{align*}
where we used the convention that $\xi_1^0=\emptyset$ and the notation of concatenation between strings $a_i^jb_k^l=(a_i,\ldots,a_j,b_k,\ldots,b_l)$. A similar computation yields $\Delta_k(\xi_{1}^{k})\leq\left|\log\alpha_k\right|$.

Therefore,
$$\Delta_i(\xi_{1}^{i})\leq \sum_{j=0}^{k-i}D_{i,i+j}(\xi_{1}^{i})|\log\alpha_{i+j}|=(D(\xi_1^k)L)_i\,,\,\,i=1,\ldots,k$$
where $D(\xi_{1}^{n})$ is the upper triangular $k\times k$ matrix defined by
\begin{align*}
D_{i,i+j}(\xi_{1}^{i})&:=\sum_{u_{i+1}^{k},v_{i+1}^{k}\in\{0,1\}^{k-i}}\mathcal{Q}(u_{i+1}^{k},v_{i+1}^{k}|1\xi_{1}^{i-1}1,1\xi_{1}^{i-1}0){\bf1}\{u_{i+j}\neq v_{i+j}\}\\&=\mathcal{Q}(u_{i+j}\neq v_{i+j}|1\xi_{1}^{i-1}1,1\xi_{1}^{i-1}0), \quad \mbox{for  } j=0,\ldots,k-i,
\end{align*}
 $L$ is the $k\times1$ matrix (column  vector) with entries $L_{i,1}=|\log\alpha_i|,\,i=1,\ldots,k$ and finally $\mathcal{Q}((\cdot,\cdot)|1\xi_{1}^{i-1}1,1\xi_{1}^{i-1}0)$ denotes the law of a coupling between two discrete renewal sequence having the same inter-arrival distribution and starting from the different configurations $1\xi_{1}^{i-1}1$ and $1\xi_{1}^{i-1}0$. 

\paragraph{Coupling and conclusion of the proof of the Lemma.}
 Let $\ell(1\xi_{1}^{i})$ denote the smaller integer $k$ such that $\xi^i_{i-k+1}=0^k$, where $0^k=(0,\ldots,0)$ denotes the string of $k$ consecutive $0$'s. Observe that $\ell(1\xi_{1}^{i})\leq \ell(10^i)=i$ (this is one of the main differences with the case of \citeasnoun{chazottes/collet/kulske/regig/2007}). Then, for $i\ge1$, $D_{i,i+j}(\xi_{1}^{i})$ is equal to the probability that two coupled renewal processes, one undelayed (the one starting with $1\xi_{1}^{i-1}1$), and the other with delay $\ell(1\xi_{1}^{i-1}0)\ge1$ (the one starting with $1\xi_{1}^{i-1}0$), disagree at time $j$. 
Recall the coupling that we define before the statement of the lemma. 
%
Using this coupling we have the upper bound,
\[
D_{i,i+j}(\xi_{1}^{i})=\P_{\bf U}\left(\tilde\xi^{(0)}_j\neq\tilde\xi^{(\ell(\xi_{1}^{i}))}_j\right)\leq \P_{\bf U}(\tau_{0,\ell(\xi_{1}^{i})}\ge j) \le \sup_{\ell=1,\ldots,i}\P_{\bf U}(\tau_{0,\ell}\ge j)
\]
where the last inequality follows  taking the supremum over all the possible values of $\ell(\xi_{1}^{i})$ for any $i=1,\ldots,k$.
{In view of \eqref{eq:hoef}, we now take
\[
d_i:=( DL)_i=\sum_{j=i}^k\sup_{\ell=1,\ldots,i}\P_{\bf U}(\tau_{0,\ell}\ge j)|\log\alpha_j|
\]
and thus, 
\[
\sum_{i=1}^kd_i^2\leq \sum_{i=1}^k\left(\sum_{j=i}^k\sup_{\ell=1,\ldots,i}\P_{\bf U}(\tau_{0,\ell}\ge j)|\log\alpha_j|\right)^2.
\]
Recalling that we translated the indexes in the beginning of the proof, what we proved, from \eqref{eq:hoef}, is that
\[
\E\left(e^{\sum_{i=0}^{k-1} \xi_i\log\alpha_i-\E\sum_{i=0}^{k-1} \xi_i\log\alpha_i}\right)\leq e^{\frac{1}{8}\sum_{i=1}^k\left(\sum_{j=i}^k\sup_{\ell=1,\ldots,i}\P_{\bf U}(\tau_{0,\ell}\ge j)|\log\alpha_{j-1}|\right)^2}
\]
and therefore
\begin{equation}\label{eq:mais_geral}
\E\prod_{i=0}^{k-1}\alpha_i^{\xi_i}\leq \prod_{i=0}^{k-1}\alpha_i^{\P(\xi_i=1)}e^{\frac{1}{8}\sum_{i=1}^k\left(\sum_{j=i}^k\sup_{\ell=1,\ldots,i}\P_{\bf U}(\tau_{0,\ell}\ge j)|\log\alpha_{j-1}|\right)^2}.
\end{equation}
This concludes the proof of the lemma. }

\end{document}